\newtheorem*{lemma*}{Lemma}
\newtheorem*{theorem*}{Theorem}
\newtheorem{theorem}{Theorem}[section]
\newtheorem{proposition}[theorem]{Proposition}
\newtheorem{lemma}[theorem]{Lemma}
\newtheorem*{thma}{Theorem A}
\newtheorem*{thmb}{Theorem B}
\newtheorem{corollary}[theorem]{Corollary}
\newtheorem*{conjecture*}{Conjecture}
\theoremstyle{definition}
\newtheorem{definition}[theorem]{Definition}
\newtheorem*{claim*}{Claim}
\theoremstyle{remark}
\newtheorem{remark}[theorem]{Remark}
\theoremstyle{definition}
\newcommand{\diff}{d}
\title{Generalized Euler classes, differential forms and commutative DGAs}
\author[1]{Alexander Gorokhovsky\thanks{Email: \texttt{alexander.gorokhovsky@colorado.edu}; partially supported by NSF grant DMS-0900968.}}
\author[2]{Dennis Sullivan\thanks{Email: \texttt{dennis@math.sunysb.edu};  partially supported by the Einstein Chair Research Fund and CUNY Graduate Center.}}
\author[3]{Zhizhang Xie\thanks{Email: \texttt{xie@math.tamu.edu}; partially supported by NSF grant DMS-1500823.}}
\affil[1]{Department of Mathematics, University of Colorado, Boulder}
\affil[2]{Department of Mathematics, SUNY, Stony Brook}
\affil[3]{Department of Mathematics, Texas A\&M Univeristy}
\date{}
\begin{document}

\maketitle

\begin{abstract}
	In the context of commutative differential graded algebras over $\mathbb Q$, we show that an iteration of ``odd spherical fibration" creates a ``total space" commutative differential graded algebra with only odd degree cohomology. Then we show for such a commutative differential graded algebra that, for any of its ``fibrations"  with ``fiber" of finite cohomological dimension,  the induced map on cohomology  is injective.
\end{abstract}

\section{Introduction}
In geometry, one would like to know which rational cohomology classes in a base space can be annihilated by pulling up to a fibration over the base with finite dimensional fiber. One knows   that if $[x]$ is a $2n$-dimensional rational cohomology class on a finite dimensional CW complex  $X$, there is a $(2n-1)$-sphere fibration over $X$ so that $[x]$ pulls up to zero in the cohomology groups of the total space.  In fact there is a complex vector bundle $V$ over $X$ of rank $n$ whose Euler class is a multiple of $[x]$. Thus this multiple is the obstruction to a nonzero section of $V$, and  vanishes when pulled up to the part of $V$ away from the zero section,  which deformation retracts to the unit sphere bundle.

Rational homotopy theory provides a natural framework to study this type of questions, where topological spaces are replaced by commutative differential graded algebras (commutative DGAs) and topological fibrations replaced by algebraic fibrations. This will be the context in which we work throughout the paper. The reader can read more in  \cite{MR1802847, MR2355774, MR0646078} about the topological meaning of the results of this paper from the perspective of rational homotopy theory of manifolds and general spaces.

The first theorem (Theorem $\ref{thm:itoddsphere}$) of the paper states that the above construction, when iterated, creates a ``total space" commutative DGA with only odd degree cohomology.

\begin{thma}
For each commutative DGA  $(A, d)$, there exists an iterated odd algebraic spherical fibration $(TA, d) $ over $(A, d)$ so that  all even cohomology [except dimension zero] vanishes.
\end{thma}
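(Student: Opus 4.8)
The plan is to construct $(TA,d)$ as a filtered colimit of a tower of odd algebraic spherical fibrations over $(A,d)$, built up one even degree at a time; as is standard I assume the CDGAs involved are connected, i.e.\ $H^0=\mathbb Q$. The elementary building block is the following move. Given a connected CDGA $(B,d)$ and an integer $m\ge 1$, pick cocycles $u_i\in B^{2m}$ whose classes $[u_i]$ run over a basis of $H^{2m}(B)$, introduce generators $y_i$ of (odd) degree $2m-1$, and let $V$ be their span; then $(B\otimes\Lambda V,D)$ with $D|_B=d$ and $Dy_i=u_i$ is a well-defined odd spherical fibration over $(B,d)$ (note $D^2y_i=du_i=0$). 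Since each $u_i=Dy_i$ is a coboundary in $B\otimes\Lambda V$, the map $H^{2m}(B)\to H^{2m}(B\otimes\Lambda V)$ kills a spanning set, hence is zero. The key technical point is that this move leaves lower cohomology untouched: $H^j(B)\to H^j(B\otimes\Lambda V)$ is an isomorphism for $j\le 2m-1$. I would prove this via the spectral sequence of the filtration of $B\otimes\Lambda V$ by word length in the $y_i$, whose $E_1$-page is the Koszul complex of the family $\{[u_i]\}$ over the ring $H^*(B)$: in cohomological degree $\le 2m-1$ only the length-$0$ and length-$1$ columns contribute, the only possibly nonzero length-$1\to$length-$0$ differential there is in degree $2m-1$ and is exactly the isomorphism $V\xrightarrow{\ \sim\ }H^{2m}(B)$, $y_i\mapsto[u_i]$, and the page degenerates in that range. (Equivalently, one iterates the long exact sequence of $0\to B\to B\otimes\Lambda(y)\to B[1-2m]\to 0$ for a single generator, whose connecting homomorphism is multiplication by $[u]$.)

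One application of this move need not make $H^{2m}$ vanish: the same spectral sequence identifies $H^{2m}(B\otimes\Lambda V)$ with $\ker\bigl(H^1(B)\otimes H^{2m}(B)\to H^{2m+1}(B)\bigr)$, so when $H^1(B)\ne 0$ new even classes of degree $2m$ can be created. This is the main obstacle, and I handle it by iterating and taking a colimit. Starting from $B=B_0$, apply the move at degree $2m$ repeatedly, obtaining $B_k\to B_{k+1}$, and set $\bar B=\operatorname{colim}_k B_k$; this is an iterated odd spherical fibration over $B$. At each stage the move kills a full basis of $H^{2m}(B_k)$, so every transition map $H^{2m}(B_k)\to H^{2m}(B_{k+1})$ is zero; since cohomology commutes with filtered colimits, $H^{2m}(\bar B)=\operatorname{colim}_k H^{2m}(B_k)=0$. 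And each transition map is an isomorphism on $H^{\le 2m-1}$, hence so is $B\to\bar B$. Thus $\bar B$ has $H^{2m}(\bar B)=0$ and $H^j(\bar B)=H^j(B)$ for $j\le 2m-1$.

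Finally I assemble $TA$. Put $C_0=A$, and having constructed $C_{m-1}$, let $C_m=\bar C_{m-1}$ be obtained by applying the previous paragraph to $C_{m-1}$ at degree $2m$; it is an iterated odd spherical fibration over $C_{m-1}$ with $H^{2m}(C_m)=0$ and $H^j(C_m)=H^j(C_{m-1})$ for $j\le 2m-1$. By induction on $m$, $H^{2\ell}(C_m)=0$ for all $1\le\ell\le m$: the vanishing secured at earlier stages persists because each later move is an isomorphism in degrees $\le 2m-1$, a range containing all the earlier even degrees; and $H^0$ stays $\mathbb Q$ since only positive-degree generators are ever adjoined. Let $(TA,d)=\operatorname{colim}_m C_m$, which is an iterated odd spherical fibration over $(A,d)$. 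For each $\ell\ge 1$ we get $H^{2\ell}(TA)=\operatorname{colim}_m H^{2\ell}(C_m)=0$, since the terms vanish once $m\ge\ell$, while $H^0(TA)=\mathbb Q$; hence all even cohomology of $TA$ except in degree $0$ vanishes, as required. The one step requiring real care is the lower-degree invariance of the basic move, i.e.\ the degeneration of the Koszul spectral sequence through degree $2m-1$; everything else is bookkeeping with filtered colimits.
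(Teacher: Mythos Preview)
Your argument is correct, but it is considerably more elaborate than the paper's. The paper does not work one even degree at a time. Instead, at each stage it chooses cocycle representatives for a basis of \emph{all} of $\bigoplus_{k>0}H^{2k}(\mathcal A_{m-1})$ simultaneously, adjoins one odd generator for each, and thereby forces the transition map $H^{2k}(\mathcal A_{m-1})\to H^{2k}(\mathcal A_m)$ to be zero for every $k>0$ at once. A single filtered colimit $TA=\varinjlim_m\mathcal A_m$ then has $H^{2k}(TA)=0$ for all $k>0$, because cohomology commutes with this colimit and every transition map vanishes. No lower-degree invariance lemma is needed, and the worry you raise about new degree-$2m$ classes appearing is simply absorbed: they get killed at the next stage along with everything else.

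Your degree-by-degree construction buys something the paper's does not: at each intermediate $C_m$ you know that $H^{2\ell}(C_m)=0$ for all $1\le\ell\le m$ \emph{and} that $H^j(A)\cong H^j(C_m)$ for $j\le 2m-1$, so the odd cohomology of $A$ is visibly preserved through the process. The paper's construction gives no such control over intermediate odd cohomology. On the other hand, the price you pay is the spectral-sequence (or iterated Gysin) argument for lower-degree invariance and a two-level colimit, whereas the paper's proof is essentially a one-line observation once Proposition~3.1 is in hand.
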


Our next theorem (Theorem $\ref{thm:oddsphere}$) then limits the odd degree classes that can be annihilated by fibrations whose fiber has finite cohomological dimension.

\begin{thmb}
Let $(B, \diff)$ be a connected commutative DGA  such that $H^{2k}(B)=0$ for all $0 < 2k  \leq 2N$. If $\iota \colon (B, \diff) \to (B\otimes \Lambda V, \diff) $ is an algebraic fibration whose  algebraic fiber has finite cohomological dimension, then the induced map
\[ \iota_\ast \colon \bigoplus_{i\leq 2N} H^i(B)\to  \bigoplus_{i\leq 2N} H^i(B\otimes \Lambda V)\]
is injective.
\end{thmb}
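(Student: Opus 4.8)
The plan is to run the Serre--Leray spectral sequence of the algebraic fibration and to show that in the range $\le 2N$ no cohomology class of $B$ is hit by a nonzero differential. First I would reduce the statement: since $H^{2k}(B)=0$ for $0<2k\le 2N$, the source of $\iota_\ast$ is $H^0(B)\oplus\bigoplus_{i\ \mathrm{odd},\ i<2N}H^i(B)$, and the summand $H^0(B)$ maps injectively because $B$ and $B\otimes\Lambda V$ are connected. So it suffices to prove: if $b\in B^n$ is a cocycle with $n$ odd, $0<n\le 2N$, and $\iota(b)$ is a coboundary in $B\otimes\Lambda V$, then $b$ is a coboundary in $B$. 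Filtering $B\otimes\Lambda V$ by the $B$-degree produces a convergent (right half-plane) spectral sequence with $E_2^{p,q}=H^p(B;\mathcal H^q)$, where $\mathcal H^q$ is the coefficient system with fibre $H^q(\Lambda V,\bar d)$, abutting to $H^\ast(B\otimes\Lambda V)$. Here $E_2^{n,0}=H^n(B)$, the base row $E^{\bullet,0}_\bullet$ consists of permanent cycles (an outgoing differential would land in a negative fibre degree), and $E_\infty^{n,0}$ is precisely the image of $\iota_\ast$ in degree $n$. Hence $\iota_\ast$ is injective in degree $n$ if and only if every differential $d_r\colon E_r^{n-r,\,r-1}\to E_r^{n,0}$ with $r\ge 2$ vanishes.

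Next I would kill the sources. Finite cohomological dimension of the fibre gives $\mathcal H^q=0$ for $q>m$, so only $2\le r\le m+1$ occur, and the source $E_r^{n-r,r-1}$ is a subquotient of $H^{n-r}(B;\mathcal H^{r-1})$ with $0\le n-r\le 2N-2$. If $n-r$ is positive and even then $H^{n-r}(B)=0$, so the source vanishes (one must check the coefficient system does not revive this group: it suffices either to reduce to $B$ simply connected or to observe that $\mathcal H^q$ is built by successive extensions compatible with the vanishing). If $n-r$ is positive and odd then $r$ is even and the fibre degree $r-1$ is odd; using that $d_r$ is $H^\ast(B)$-linear (the base row being permanent cycles), the differential out of $E_r^{n-r,r-1}$ is governed by the transgression of odd-degree fibre classes into $H^r(B)$ with $r$ even and $0<r<2N$, hence zero by hypothesis. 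The surviving possibility is $n-r=0$, i.e.\ the transgression $\tau\colon H^{n-1}(\Lambda V,\bar d)\to H^n(B)$ of a fibre class $\zeta$ of even degree $q=n-1$, and everything comes down to showing that $\tau$ vanishes on classes of even degree.

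This is the crux, and it is where finite cohomological dimension enters a second time. The powers $\zeta^k$ have unbounded degree, so $\zeta^{k_0}=\bar d w$ in $(\Lambda V,\bar d)$ for some minimal $k_0\ge 2$. Choose lifts $\tilde\zeta\in B\otimes\Lambda V$ with $d\tilde\zeta=b\in B$ representing $\tau[\zeta]$, and $\tilde w$ with $d\tilde w=\tilde\zeta^{\,k_0}+\rho$, $\rho\in B^{\ge 1}\otimes\Lambda V$; then $0=d^2\tilde w=k_0\,\tilde\zeta^{\,k_0-1}b+d\rho$. In the model situation of a fibre $\Lambda(x,w)$ with $\bar d x=0$, $\bar d w=x^{k_0}$ and a single even generator $x$ with $dx=b$, one writes $dw=x^{k_0}+\sum_{0\le j<k_0}c_jx^{j}$ with $c_j\in B^{+}$, and reading off the coefficient of $x^{k_0-1}$ in $d^2w=0$ gives exactly $k_0 b=-dc_{k_0-1}$, so $[b]=0$ in $H^n(B)$ because we work over $\mathbb Q$. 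The general case should be reduced to (or argued in parallel with) this model one.

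The main obstacle of the whole argument is precisely this reduction: for a general $\zeta$ the element $\rho$ involves other fibre generators whose differentials interfere with the $B$-degree (equivalently, with the $\zeta$-degree) filtration, so one must carefully isolate the component of the relation $k_0\,\tilde\zeta^{\,k_0-1}b=-d\rho$ that descends to the torsion relation $k_0[b]=0$ in $H^n(B)$. As a variant that streamlines the bookkeeping in the first two steps, one may first apply Theorem~A: since $H^{2k}(B)=0$ for $0<2k\le 2N$, every odd spherical fibration in the construction of $TB$ has Euler class in degree $>2N$, so $H^i(B)\to H^i(TB)$ and $H^i(B\otimes\Lambda V)\to H^i(TB\otimes\Lambda V)$ are isomorphisms for $i\le 2N$ and compatible with $\iota_\ast$, reducing Theorem~B to the case in which $B$ has no positive even cohomology at all.
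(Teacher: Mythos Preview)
Your spectral-sequence approach is genuinely different from the paper's, and you correctly isolate the crux: finite cohomological dimension of the fibre forces powers of an even fibre class $\zeta$ to die, and one hopes to extract from $\zeta^{k_0}=\bar d w$ that the transgression $\tau[\zeta]$ is exact in $B$. Your ``model situation'' with a two-generator fibre $\Lambda(x,w)$ is essentially the computation in Theorem~\ref{thm:oddsphere}. But you yourself flag the general case as ``the main obstacle,'' and it is a real gap: once the correction term $\rho$ couples $\tilde\zeta$ to other fibre generators, there is no mechanism for isolating the coefficient of $\tilde\zeta^{\,k_0-1}$ in $k_0\,\tilde\zeta^{\,k_0-1}b=-d\rho$ without additional control on the base. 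Your step~4 has a parallel gap: the $H^\ast(B)$-linearity argument tacitly assumes that $E_r^{n-r,r-1}$ is generated over the bottom row by the fibre edge $E_r^{0,r-1}$, which holds at $E_2$ with trivial coefficients but need not persist to higher pages; knowing $H^r(B)=0$ does not by itself force $d_r\colon E_r^{n-r,r-1}\to E_r^{n,0}$ to vanish.

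The paper supplies exactly the missing reduction. It lifts the fibration to the minimal model $\mathcal M_B$, shows (Proposition~\ref{prop:sw} and the surrounding lemmas) that in degrees $\le 2N-1$ this minimal model agrees with that of a bouquet of odd spheres, and deduces (Corollary~\ref{cor:tosphere}) that any nonzero class $\alpha\in H^{2k+1}(\mathcal M_B)$ with $2k+1<2N$ is detected by a DGA map $\psi\colon\mathcal M_B\to(\Lambda(\eta),0)$. Pushing the fibration forward along $\psi$ reduces the entire question to base $(\Lambda(\eta),0)$; there the total space is $\Lambda(\eta)\otimes\Lambda V$ with $\eta$ a single odd generator, and your model computation applies verbatim and becomes Theorem~\ref{thm:oddsphere}. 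The idea you are missing is precisely this projection to a single odd sphere; your closing variant via Theorem~A tidies up degree-range bookkeeping but does not provide it.
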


It follows from the two theorems above that the iterated odd spherical fibration construction is universal for cohomology classes that pull back to zero by any fibrations whose fiber has finite cohomological dimension.

The paper is organized as follows. In Section $\ref{sec:pre}$, we recall some definitions from rational homotopy theory. In Section $\ref{sec:gysin}$, we use iterated algebraic spherical fibrations to prove Theorem A. In Section  $\ref{sec:algsphere}$, we define bouquets of algebraic spheres and analyze their minimal models. In Section $\ref{sec:main}$, we prove Theorem B.

The work in this paper started during a visit of the authors at IHES. The authors would like to thank IHES for its hospitality and for providing a great environment.

\section{Preliminaries} \label{sec:pre}

We recall some definitions related to commutative differential graded algebras. For more details, see \cite{MR1802847, MR2355774, MR0646078}.

\begin{definition}
	A commutative differential graded algebra (commutative DGA) is a graded algebra $B = \oplus_{i\geq 0} B^{i}$ over $\mathbb Q$ together with a differential $\diff\colon B^i \to B^{i+1}$ such that $\diff^2 = 0$,  $xy = (-1)^{ij} yx,$ and  $\diff(xy) = (\diff x) y + (-1)^{i} x (\diff y)$,  for all $x\in B^{i}$ and  $y\in B^{j}$.
	
\end{definition}

\begin{definition}
	\begin{enumerate}[(1)]
		\item
		A commutative DGA $(B, \diff)$ is called connected if $B^0 = \mathbb Q$.
		\item A commutative DGA $(B, \diff) $ is called simply connected if  $(B, \diff)$ is connected and $H^1(B) = 0$.
		\item
		A commutative DGA $(B, \diff)$ is of finite type if $H^k(B)$ is finite dimensional for all $k\geq 0$.
		\item A commutative DGA $(B,\diff )$ has finite cohomological dimension $d$, if $d$ is the smallest integer such that $H^k(B) = 0 $ for all $k> d$.
	\end{enumerate}
\end{definition}

\begin{definition}
	A connected commutative DGA $(B, \diff)$ is called a model algebra if as a commutative graded algebra it is free on a set of generators $\{x_\alpha\}_{\alpha\in \Lambda}$ in positive degrees, and these generators can be partially ordered so that $\diff x_\alpha$ is an element in the algebra generated by $x_\beta$ with $\beta < \alpha$.

\end{definition}

\begin{definition}
	A model algebra $(B, d)$ is called minimal if for each generator $x_\alpha$,  $\diff x_\alpha$ has no linear term, that is,
	\[ \diff(B) \subset B^+ \cdot B^+, 	\textup{ where } B^+ = \oplus_{k>0} B^k.\]

\end{definition}

\begin{remark}
	For every connected commutative DGA $(A, \diff_A)$, there exists a  minimal model algebra $(\mathcal M(A), \diff)$ and a morphism $\varphi\colon (\mathcal M(A), \diff)\to (A, \diff_A)$ such that $\varphi$ induces an isomorphism on cohomology. $(\mathcal M(A), \diff)$ is called a minimal model of $(A, \diff)$, and is unique up to isomorphism. See page $288$ of \cite{MR0646078} for more details, cf. \cite{MR1802847, MR2355774}.
\end{remark}

\begin{definition}\label{def:algfib}
	\begin{enumerate}[(i)]
		\item An algebraic fibration (also called \textit{relative model algebra}) is an inclusion of commutative DGAs $(B, \diff) \hookrightarrow (B\otimes \Lambda V, \diff)$ with $V = \oplus_{k\geq 1} V^k$ a graded vector space; moreover,  $V = \bigcup_{n=0}V(n)$, where $V(0) \subseteq V(1) \subseteq V(2) \subseteq \cdots $ is an increasing sequence of graded subspaces of $V$ such that
		\[ d : V(0) \to B \quad \textup{ and } \quad  d: V(n) \to B\otimes \Lambda V(n-1), \quad n\geq 1,\]
		where $\Lambda V$ is the free commutative DGA generated by $V$.
		\item An algebraic fibration is called minimal  if
		\[  \textup{Im}(\diff) \subset B^+\otimes \Lambda V + B\otimes \Lambda^{\geq 2} V.\]
		
	\end{enumerate}

\end{definition}

Let $\iota\colon (B, \diff) \hookrightarrow (B\otimes \Lambda V, \diff)$ be an algebraic fibration. Suppose $B$ is connected.  Consider the canonical augmentation morphism $\varepsilon: (B, \diff) \to (\mathbb Q, 0)$ defined by  $\varepsilon (B^+) = 0$. It  naturally induces a commutative DGA:
\[ (\Lambda V, \bar \diff) := \mathbb Q\otimes_B (B\otimes \Lambda V, \diff).\]
We call $(\Lambda V, \bar \diff) $ the algebraic fiber of the given algebraic fibration.

\section{Iterated odd spherical algebraic fibrations}\label{sec:gysin}

In this section, we show that for each commutative DGA, there exists an iterated odd algebraic spherical fibration over it such that the total commutative DGA has only odd degree cohomology.

Let $(B, \diff)$ be a connected commutative DGA. An \emph{odd algebraic spherical fibration} over $(B, \diff)$  is an inclusion of commutative DGAs of the form
\[ \varphi: (B, \diff) \to (B\otimes \Lambda (x), \diff), \]
such that $\diff x \in B$,  where  $x$ has degree $2k-1$ and $\Lambda(x)$ is the free commutative graded algebra generated by $x$. The element $e = \diff x \in B$ is called the Euler class of this algebraic spherical fibration.

\begin{proposition}\label{prop:gysin}
 Let $(B, \diff)$ be a commutative DGA. For every even dimensional class $\beta\in H^{2k}(B)$ with $k>0$, there exists an odd algebraic spherical fibration $\varphi\colon (B, d) \to (B\otimes \Lambda(x), \diff)$ such that its Euler class is equal to $\beta$ and the kernel of the map $\varphi_\ast\colon H^{i+2k}(B) \to H^{i+2k}(B\otimes \Lambda (x) )$ is  $H^i(B)\cdot\beta = \{a\cdot \beta \mid a\in H^i(B)\}$.
 \end{proposition}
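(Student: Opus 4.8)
The plan is to compute the cohomology of $(B\otimes\Lambda(x),\diff)$ directly via a short exact sequence of complexes, exactly as in the topological Gysin sequence. Since $\deg x = 2k-1$ is odd, $\Lambda(x) = \mathbb Q \oplus \mathbb Q x$, so as a graded $B$-module $B\otimes\Lambda(x) = B \oplus Bx$, and every element is uniquely $a + bx$ with $a,b\in B$. Choosing a representative cocycle (still called $\beta$) for the class $\beta\in H^{2k}(B)$, we set $\diff x = \beta$; note $\diff\beta = \diff^2 x = 0$, so this is legitimate, and the inclusion $\varphi\colon (B,\diff)\to(B\otimes\Lambda(x),\diff)$ is an odd algebraic spherical fibration with Euler class $\beta$ (the value of $\beta$ in $H^{2k}(B)$ is well defined independent of the representative choice, which is all the statement asks).

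First I would set up the short exact sequence of cochain complexes
\[ 0 \to (B,\diff) \xrightarrow{\ \varphi\ } (B\otimes\Lambda(x),\diff) \xrightarrow{\ \pi\ } (Bx, \diff) \to 0, \]
where $\pi(a+bx) = bx$ and the differential on the quotient $Bx \cong B[-(2k-1)]$ is the one induced from $B$ (the cross terms coming from $\diff x$ die in the quotient). Passing to the long exact sequence in cohomology, the connecting homomorphism $\delta\colon H^{\ast}(Bx) \to H^{\ast+1}(B)$ is, by the usual diagram chase, given by $\delta(bx) = (\diff b) x \mapsto$ ``the $B$-part of $\diff(bx)$'' $= b\,\diff x = b\beta$; that is, under the identification $H^{i}(Bx)\cong H^{i-(2k-1)}(B)$, the connecting map is precisely multiplication by $\beta$, namely $\delta\colon H^{i}(B)\to H^{i+2k}(B)$, $a\mapsto a\cdot\beta$. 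Then the long exact sequence reads
\[ \cdots \to H^{i}(B) \xrightarrow{\ \cdot\,\beta\ } H^{i+2k}(B) \xrightarrow{\ \varphi_\ast\ } H^{i+2k}(B\otimes\Lambda(x)) \to \cdots, \]
and exactness at $H^{i+2k}(B)$ immediately gives $\Ker\varphi_\ast = \mathrm{Im}(\cdot\,\beta) = H^{i}(B)\cdot\beta$, which is the claim.

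The routine parts are the verification that $\varphi$ is indeed a morphism of DGAs (immediate: $\diff$ is a derivation and $\diff x\in B$) and that the sequence of complexes is exact (clear from the direct sum decomposition $B\otimes\Lambda(x)=B\oplus Bx$). The one point demanding a little care — and the only real ``obstacle'' — is the precise identification of the connecting homomorphism with multiplication by $\beta$, including getting the degree shift right ($\deg x = 2k-1$ shifts $H^i(Bx)$ to sit over $H^{i-2k+1}(B)$, and $\delta$ raises degree by one, landing in $H^{i-2k+2}(B)$... re-indexing so the source is $H^i(B)$ puts the target in $H^{i+2k}(B)$) and checking the sign/module-structure bookkeeping so that the map is literally $a\mapsto a\beta$ rather than $\pm a\beta$ (a sign is harmless here since it does not change the image, but it is worth noting). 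Once $\delta$ is identified, the conclusion is a formal consequence of exactness of the long exact sequence, so no further work is needed.
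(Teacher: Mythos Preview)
Your proof is correct and follows essentially the same approach as the paper: both set up the short exact sequence $0 \to B \to B\otimes\Lambda(x) \to Bx \to 0$, identify the quotient with a shift of $B$, and read off the Gysin sequence whose connecting map is cup product with the Euler class. Your version is in fact a bit more explicit about the diagram chase identifying $\delta$ with multiplication by $\beta$, which the paper simply asserts.
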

\begin{proof}
	Let $(B\otimes \Lambda (x), \diff)$ be the commutative DGA obtained from $(B, \diff)$ by adding a generator $x$ of degree $2k-1$ and defining its differential to be $\diff x = \beta$. We have the following short exact sequence
		\[  0 \to (B, \diff) \to (B\otimes \Lambda (x), \diff) \to ( B \otimes (\mathbb Q\cdot x), \diff\otimes \textup{Id}) \to 0,\]
		which induces a long exact sequence 	 \[ \cdots \to H^{i-1}(B\otimes (\mathbb Q \cdot x)) \to H^i(B) \to H^i(B\otimes \Lambda (x)) \to H^i(B\otimes (\mathbb Q \cdot x)) \to \cdots.
		\]
	Applying the identification  $H^{i + (2k-1)}(B\otimes (\mathbb Q \cdot x)) \cong  H^i(B)$, we obtain the following Gysin sequence
	\[ \cdots \to H^{i}(B) \xrightarrow{\cup e} H^{i+2k}(B) \xrightarrow{\varphi_\ast} H^{i+2k}(B\otimes \Lambda (x)) \xrightarrow{\partial_{i+1}} H^{i+1}(B) \to \cdots.  \]
	This finishes the proof.
\end{proof}

\begin{definition}
	An \emph{iterated odd algebraic spherical  fibration} over $(B, d)$ is algebraic fibration $(B, \diff) \hookrightarrow (B\otimes \Lambda V, \diff)$
	such that  $V^k=0$ for $k$ even.     This fibration is called  \emph{finitely iterated odd algebraic spherical fibration} if $\dim V < \infty$.
	
\end{definition}

Now let us prove the main result of this section.

\begin{theorem}\label{thm:itoddsphere}
	For each commutative DGA  $(A, d)$, there exists an iterated odd algebraic spherical fibration $(TA, d) $ over $(A, d)$ such that   all even cohomology [except dimension zero] vanishes.
\end{theorem}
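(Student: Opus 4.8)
The plan is to build $(TA,d)$ by transfinite iteration of Proposition~\ref{prop:gysin}, killing even cohomology classes one at a time. Concretely, I would start with $(A,d)$ and, whenever there is a nonzero class $\beta \in H^{2k}(TA)$ with $k>0$, adjoin an odd generator $x$ of degree $2k-1$ with $dx=\beta$; by Proposition~\ref{prop:gysin} this produces an odd algebraic spherical fibration under which the kernel of the induced map on $H^{2k}$ contains $\beta$ (indeed equals $H^0(TA)\cdot\beta$, so at least the class $\beta$ itself dies since $H^0(A)=\mathbb Q\ni 1$). The composition of all the spherical fibrations introduced is, by Definition~\ref{def:algfib}, again an algebraic fibration $(A,d)\hookrightarrow (A\otimes\Lambda V,d)=(TA,d)$ with $V$ concentrated in odd degrees, hence an iterated odd algebraic spherical fibration; the filtration $V(0)\subseteq V(1)\subseteq\cdots$ is exactly the filtration recording the order in which generators were added.

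The key steps, in order: (1) fix a well-ordering strategy for which even classes to kill and in what order — I would proceed degree by degree, killing all of $H^2$, then $H^4$, and so on, and within a degree choose a basis and adjoin one generator per basis element; (2) at each stage invoke Proposition~\ref{prop:gysin} to realize the extension and to control the kernel; (3) pass to a (possibly transfinite) direct limit and check that the limit inclusion is still an algebraic fibration in the sense of Definition~\ref{def:algfib}, with the cumulative filtration by the subspaces $V(n)$ of generators adjoined through stage $n$; (4) verify that in the limit $H^{2k}(TA)=0$ for all $k>0$. For step (4) the subtlety is that adjoining $x$ with $dx=\beta$ to kill $\beta$ in degree $2k$ simultaneously creates potentially new cohomology: the Gysin sequence $\cdots\to H^{i}\xrightarrow{\cup e}H^{i+2k}\xrightarrow{\varphi_\ast}H^{i+2k}(B\otimes\Lambda(x))\xrightarrow{\partial}H^{i+1}\to\cdots$ shows new classes in $H^{i+2k}(B\otimes\Lambda(x))$ can appear, mapping to $\ker(\cup e)\subset H^{i+1}(B)$. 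In particular, killing an even class of degree $2k$ can create new even classes in higher degrees (from $H^{\text{odd}}(B)$ paired with $x$). So the process genuinely must run upward through all degrees, and one must argue the iteration, restricted to any fixed degree, terminates or at least that the colimit has vanishing even cohomology.

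The main obstacle I anticipate is precisely this bookkeeping of newly created cohomology: one needs to show that the transfinite process does not "run away," i.e., that every even class, including those born at later stages, eventually gets killed and stays killed, and that nothing in even degree survives to the colimit. A clean way to handle this is to organize the induction so that after the $n$-th round one has $H^{2k}(TA_n)=0$ for all $0<2k\le 2n$, and to observe that the round-$n$ construction (adjoining generators in degrees $\le 2n-1$) does not alter cohomology below degree $2n$ — this uses that the new generators have degree $2n-1$, so by the Gysin sequence $H^{i}$ is unchanged for $i<2n-1$ and the only change at $i=2n$ is the intended surjection onto the quotient $H^{2n}(TA_n)/(\text{image of }\cup e)$, which for the classes we are killing is everything. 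Then $TA=\varinjlim TA_n$ has $H^{2k}(TA)=\varinjlim H^{2k}(TA_n)=0$ for every $k>0$ since the connecting maps are eventually (in fact from stage $k$ on) zero on even cohomology by construction. Finally I would record that the resulting $V=\bigcup_n V(n)$ with $V(n)=$ span of generators adjoined in rounds $\le n$ satisfies $d\colon V(n)\to A\otimes\Lambda V(n-1)$, confirming $(TA,d)$ is a bona fide iterated odd algebraic spherical fibration.
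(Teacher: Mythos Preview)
Your overall idea---iterate Proposition~\ref{prop:gysin} and pass to a direct limit---is the right one, and matches the paper. But the specific degree-by-degree organization you propose has a genuine gap. You claim that after round~$n$ (adjoining degree-$(2n-1)$ generators $x_i$ with $dx_i$ running over a basis of $H^{2n}(TA_{n-1})$) one has $H^{2n}(TA_n)=0$, because ``the only change at $i=2n$ is the intended surjection onto the quotient.'' That is not what the Gysin sequence says: with $e\in H^{2n}$ the Euler class, the relevant piece is
\[
H^0 \xrightarrow{\ \cup e\ } H^{2n} \longrightarrow H^{2n}(TA_{n-1}\otimes\Lambda(x)) \xrightarrow{\ \partial\ } H^{1} \xrightarrow{\ \cup e\ } H^{2n+1},
\]
so $H^{2n}$ of the new algebra is an extension of $\ker(\cup e\colon H^1\to H^{2n+1})$ by $H^{2n}/\langle e\rangle$. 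The boundary term can be nonzero. Concretely, take $A=\Lambda(y_1,y_2,z)$ with $|y_i|=1$, $|z|=2$, $d=0$; round~$1$ adjoins $x_1,x_2$ of degree~$1$ with $dx_1=z$, $dx_2=y_1y_2$, and then $y_1x_2$ is a closed, non-exact degree-$2$ element of $TA_1$, so $H^2(TA_1)\neq 0$. Since your later rounds add only generators of degree $\ge 3$, they leave $H^2$ untouched, and this class survives to the colimit. Thus the connecting maps $H^{2k}(TA_n)\to H^{2k}(TA_{n+1})$ are \emph{not} eventually zero---they are zero exactly once (at $n=k-1$) and isomorphisms thereafter---so $\varinjlim H^{2k}(TA_n)=H^{2k}(TA_k)$, which need not vanish.

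The fix, and what the paper does, is to abandon the degree-by-degree scheme and instead at each stage~$m$ adjoin odd generators hitting a basis of \emph{all} of $\bigoplus_{k>0}H^{2k}(\mathcal A_{m-1})$ simultaneously. Then for every $k>0$ the map $H^{2k}(\mathcal A_{m-1})\to H^{2k}(\mathcal A_m)$ is zero (each basis element becomes exact), even though $H^{2k}(\mathcal A_m)$ itself may well be nonzero. In the direct limit $TA=\varinjlim_m \mathcal A_m$ one then has $H^{2k}(TA)=\varinjlim_m H^{2k}(\mathcal A_m)=0$, since every class at some finite stage dies at the next. No claim that any finite stage has vanishing even cohomology is needed, and none is true in general.
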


\begin{proof}
	We will construct $TA$ by induction. In the following, for notational simplicity,  we shall omit the differential $\diff$ from our notation.
	
	Let $\mathcal A_0 = A$. Suppose we have defined the iterated odd algebraic spherical fibration $\mathcal A_{m-1}$ over $A$. Fix a basis of $H^{2k}(\mathcal A_{m-1})$ for each $k>0$. Denote the union of all these bases by  $\{a_i\}_{i\in I}$. Define $W_{m-1}$ to be a $\mathbb Q$ vector space with basis $\{x_i\}_{i\in I}$, where $|x_i|= |a_i| -1$.
	 We define $\mathcal A_{m}$ to be the iterated odd algebraic spherical fibration $ \mathcal A_{m-1} \otimes \Lambda (W_{m-1})$ over $\mathcal A_{m-1}$ with $\diff x_i = a_i$ for all $i\in I$. The inclusion map $\iota\colon \mathcal A_{m-1} \hookrightarrow \mathcal A_{m}$ induces the zero map $\iota_\ast = 0 \colon H^{2k}(\mathcal A_{m-1}) \to H^{2k}(\mathcal A_{m})$ for all $k>0$.
By construction,   	$\mathcal A_m$ is also an iterated odd algebraic spherical fibration.
	
		Finally, we define $TA$ to be the direct limit of $\mathcal A_m$ under the inclusions $\mathcal A_m \hookrightarrow \mathcal A_{m+1}$. Clearly,  $TA$ is an iterated odd algebraic spherical fibration over $A$. More precisely, let $V = \bigcup_{i=0}^\infty W_{i}$. We have $TA = A\otimes \Lambda V$ with the filtration of $V$ given by $V(n) = \bigcup_{i=0}^n W_{i}$.  Moreover, we have $H^{2k}(TA) = 0$ for all $2k>0$. This completes the proof.

\end{proof}

\begin{remark}\label{rm:finsphere}
	If an element $\alpha \in H^\bullet(A) $ maps to zero in $H^{\bullet}(TA)$, then there exists  a subalgebra $S_\alpha$ of $TA$ such that $S_{\alpha}$ is a \emph{finitely} iterated odd algebraic spherical  fibration over $A$ and  $\alpha$ maps to zero in $H^\bullet(S_{\alpha})$.
\end{remark}

\section{Bouquets of algebraic spheres}
 \label{sec:algsphere}
In this section, we introduce a notion of bouquets of algebraic  spheres. It is an algebraic analogue of usual bouquets of spheres in topology.

\begin{definition}\label{def:algsphere}
	For a given set of generators $X= \{x_i\}$ with $x_i$ having odd degree $|x_i|$, we define the bouquet of odd algebraic spheres labeled by $X$ to be the following commutative DGA
	\[\mathcal S(X) = \left( \bigwedge_{x_i\in X} \mathbb Q[x_i]\right)\Big/\langle x_ix_j = 0 \mid \textup{all } i, j \rangle  \]
	with the differential $d = 0$.
\end{definition}

\begin{proposition}\label{prop:algsphere}
	Let $\mathcal S(X)$ be a bouquet of odd algebraic spheres, and $\mathcal M(X) = (\Lambda V, \diff)$ be its minimal model. Then $\mathcal M(X)$ satisfies the following properties:
	\begin{enumerate}[(i)]
		\item $\mathcal M$ has no even degree generators, that is, $V$ does not contain even degree elements;
		\item each element in $H^{\geq 1}(\mathcal M(X))$ is represented by a generator, that is, an element in $V$.
	\end{enumerate}
\end{proposition}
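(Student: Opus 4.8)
\emph{Setup and property (ii).} Since $\mathcal S(X)$ carries the zero differential it coincides with its own cohomology $H=\mathbb Q\oplus\bar H$, where $\bar H=\bigoplus_i\mathbb Q x_i$ is concentrated in odd degrees and $\bar H\cdot\bar H=0$. I would first exploit the extra ``weight'' grading on $\mathcal S(X)$ in which each $x_i$ has weight $1$, so that $H$ lives only in weights $0$ and $1$. One can build the minimal model together with a \emph{weight-homogeneous} quasi-isomorphism $\rho\colon(\Lambda V,d)\to(H,0)$ (all the choices in the degree-by-degree construction can be made weight-homogeneously), and then: $V$ has no weight-$0$ generators since $\mathcal S(X)$ is connected with weight-$0$ part $\mathbb Q$; the weight-$1$ part $V_1$ of $V$ is all of the weight-$1$ part of $\Lambda V$, so $d|_{V_1}=0$ by minimality, and there are no coboundaries in weight $1$; hence $H(\Lambda V)_1=V_1$ and $\rho$ restricts to an isomorphism $V_1\xrightarrow{\ \sim\ }\bar H$. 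Finally $H(\Lambda V)_s\cong H_s=0$ for $s\ge 2$, so $H^{\ge1}(\Lambda V)=H(\Lambda V)_1=V_1\subseteq V$. This is exactly (ii), and it already shows that $V_1$ lies in odd degrees.

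\emph{Property (i).} For this I would exhibit the minimal model explicitly. Let $W$ be the graded vector space with basis $\{w_i\}$ dual to $\{x_i\}$ and $|w_i|=|x_i|-1$; since each $|x_i|$ is odd, $W$ is concentrated in \emph{even} degrees, and hence so is the free graded Lie algebra $L:=\mathbb L(W)$, because every element of $L$ is a combination of iterated brackets of the $w_i$. Form the Chevalley--Eilenberg cochain algebra $C^\ast(L)=(\Lambda V',d')$, with $(V')^n=\mathrm{Hom}(L_{n-1},\mathbb Q)$ and $d'$ dual to the bracket. Then: $d'$ is quadratic, so $(\Lambda V',d')$ is a minimal Sullivan algebra; $V'$ is concentrated in \emph{odd} degrees precisely because $L$ is concentrated in even degrees; the closed generators are $\ker(d'|_{V'})=(L/[L,L])^\ast=W^\ast$, sitting in degrees $|x_i|$; and since $U\mathbb L(W)$ is the tensor algebra $T(W)$, a free associative algebra of global dimension $\le 1$, we get $H^{\ge2}\big(C^\ast(L)\big)=\mathrm{Ext}^{\ge2}_{T(W)}(\mathbb Q,\mathbb Q)=0$, whence $H^{+}\big(C^\ast(L)\big)=W^\ast$, a copy of $\bar H$ represented by the closed generators. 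Now any linear map $V'\to H^{+}$ extends to a CDGA morphism $\psi\colon(\Lambda V',d')\to(H,0)$, since $d'V'\subseteq\Lambda^2V'$ forces $\psi(d'v)\in(H^{+})^2=0$; taking $\psi$ to send $W^\ast\subseteq V'$ isomorphically onto $\bar H$ and to kill a complement makes $\psi$ a cohomology isomorphism. By uniqueness of minimal models, $(\Lambda V,d)\cong C^\ast(\mathbb L(W))$, which has no even-degree generators; this proves (i) (and re-proves (ii)).

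\emph{Main obstacle and caveats.} The only substantial input is the identification of $\mathcal M(X)$ with $C^\ast(\mathbb L(W))$ --- i.e.\ the ``coformality'' of a bouquet of odd spheres --- and within that, the facts $U\mathbb L(W)=T(W)$ and $\mathrm{gl.dim}\,T(W)\le 1$; everything else is bookkeeping. The points to watch are the suspension/dual conventions guaranteeing ``$W$ even $\Rightarrow V'$ odd'', and the case where $X$ is infinite (or contains degree-$1$ generators, so $L$ need not be of finite type): there one reduces to finite subsets $X'\subseteq X$, using $\mathcal S(X)=\varinjlim\mathcal S(X')$, $\mathbb L(W)=\varinjlim\mathbb L(W')$ and compatibility of the constructions, or else works with the Lie coalgebra instead of dualizing. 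A purely Sullivan-theoretic alternative is to build $(\Lambda V,d)$ by increasing weight and show inductively that the excess cohomology $H(\Lambda V_{\le s-1})_s$ lies in even internal degree, so that the weight-$s$ generators resolving it are odd; this is the same content, with the inductive step carrying all the difficulty.
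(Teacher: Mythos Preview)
Your argument is correct, and at its core it is the linear dual of the paper's own proof. The paper takes the Koszul dual coalgebra of $\mathcal S(X)$, identifies it with the cofree Lie \emph{coalgebra} $\mathcal L^c(sW)$, and then applies the cobar construction; you instead take the free Lie \emph{algebra} $\mathbb L(W)$ and form its Chevalley--Eilenberg cochains. In the finite-type situation these constructions are literally dual to one another (and you yourself note that passing to the coalgebra side is the right fix when $X$ is infinite --- which is exactly what the paper does from the start). What your write-up adds beyond the paper is an explicit justification of the cohomology computation via $U\mathbb L(W)\cong T(W)$ and $\mathrm{gl.dim}\,T(W)\le 1$, whereas the paper simply asserts that the desired properties ``follow from this explicit construction.'' Your separate weight/bigraded-model argument for (ii) is a genuinely independent and more elementary route to that half of the statement, not present in the paper; it is essentially the Halperin--Stasheff bigraded model for a formal CDGA, and it has the advantage of not requiring any Koszul or Lie-theoretic machinery. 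The points you flag as caveats (suspension conventions, and the non-finite-type case) are the right ones to watch, and your suggested remedies are sound.
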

\begin{proof}
	This is a special case of Koszul duality theory, cf. \cite[Chapter 3, 7 \& 13]{MR2954392}. Since $\mathcal S = \mathcal S(X)$ has zero differential, we may forget its differential and view it as a  graded commutative algebra.   An explicit construction of a minimal model of $\mathcal S$ is given as follows:  first take the Koszul dual coalgebra $\mathcal S^{\text{\textexclamdown}}$ of $\mathcal S$; then apply the cobar construction to $\mathcal S^{\text{\textexclamdown}}$, and  denote the resulting commutative DGA by $\Omega \mathcal S^{\text{\textexclamdown}}$. By Koszul duality, $\mathcal M(X) \coloneqq \Omega \mathcal S^{\text{\textexclamdown}}$ is a minimal model of $\mathcal S$.
	
More precisely, set $W = \bigoplus_{i\geq 0} W_{i}$ to be the graded vector space spanned by $X$. Let $sW$ (resp. $s^{-1}W$) be the suspension (resp. desuspension) of $W$, that is, $(sW)_{i-1} = W_i$ (resp. $(s^{-1}W)_{i} = W_{i-1}$).   Let  $\mathcal L^c = \mathcal L^c(sW)$ be the cofree Lie coalgebra generated by $sW$. More explicitly, let $T^c(sW) = \bigoplus_{n\geq 0}(sW)^{\otimes n}$ be the tensor coalgebra, and $T^c(sW)^+ = \bigoplus_{n\geq 1} (sW)^{\otimes n}$. The coproduct on $T^c(sW)$ naturally induces a Lie cobraket on $T^c(sW)$. Then we have $\mathcal L^c(sW) = T^c(sW)^+/T^c(sW)^+\ast T^c(sW)^+$, where $\ast$ denotes the shuffle multiplication.  With the above notation, we have  $\mathcal S^{\text{\textexclamdown}} \cong \mathcal L^c$.  The cobar construction of $\mathcal L^c$ is given explicitly by
\[  \mathbb Q\to s^{-1}\mathcal L^c \xrightarrow{\ d \ } \Lambda^2 (s^{-1}\mathcal L^c) \to \cdots \to \Lambda^n(s^{-1}\mathcal L^c) \to \cdots \]
with the differential $d$ determined by the Lie cobraket of $\mathcal L^c$.  Now the desired properties of $\mathcal M(X)$  follow from this explicit construction.
\end{proof}

\begin{remark}
	In the special case of a bouquet of odd  algebraic spheres where the cohomology of a commutative DGA model is that of a circle or the first Betti number  is  zero,  this was discussed  by Baues \cite[Corollary 1.2]{MR0442922} and by Halperin and Stasheff \cite[Theorem 1.5]{MR539532}.
	
\end{remark}

\section{Main theorem} \label{sec:main}

In this section, we show that if a commutative DGA has cohomology,  up to a  certain degree, isomorphic to that of a bouquet of odd algebraic spheres, then its minimal model is isomorphic to that of the bouquet of odd algebraic spheres, up to that given degree. Then we apply it to prove that if a commutative DGA has only odd degree cohomology up to a certain degree, then all nonzero cohomology classes
up to that degree will never pull back to zero by any algebraic fibration whose fiber has finite cohomological dimension.

Suppose $B$ is a connected  commutative DGA of finite type such that $H^{2k}(B)=0$ for all $ 0 < 2k \le 2N$. Let  $X_i$ be  a basis of $H^{i}(B)$ and $X = \bigcup_{i=1}^{2N+1}X_i$. Let  $M = \mathcal M(X)$ be the bouquet of odd algebraic spheres labeled by $X$ from Definition $\ref{def:algsphere}$. Then we have $H^{i}(M) \cong H^i(B)$ for all $0\leq i \leq 2N$. Let $M_k \subset M$ be the subalgebra generated
by the generators of degree $\leq k$.

\begin{lemma}Let  $k$ be an odd integer. Then $H^{k+2}(M_k)=H^{k+1}(M_k) =0$.
\end{lemma}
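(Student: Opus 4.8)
The plan is to compare $M_k$ with the whole minimal model $M=\mathcal M(X)$ through the quasi-isomorphism $\varphi\colon (M,\diff)\to(\mathcal S(X),0)$ from the minimal model to $\mathcal S(X)$, exploiting two features of the situation: $M$ has no even-degree generators (Proposition~\ref{prop:algsphere}(i)), and $\mathcal S(X)$ has no nontrivial products in positive degrees.

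First I would note that $M_k$ is a sub-DGA of $M$: if $v$ is a generator with $|v|\le k$, then $\diff v$ has degree $\le k+1$ and, by minimality, lies in $\Lambda^{\ge 2}V$, so every generator occurring in it has degree $\le k$; hence $\diff(M_k)\subseteq M_k$. Next comes the decisive degree count: since $k+1$ is even, Proposition~\ref{prop:algsphere}(i) forbids generators of $M$ in degree $k+1$, so every generator of $M$ that is not a generator of $M_k$ has degree $\ge k+2$. Consequently a monomial of $M$ of degree $\le k+1$ can involve only generators of degree $\le k$, which yields the identification of graded vector spaces $M^j=M_k^j$ for all $j\le k+1$.

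The last ingredient is a decomposability observation: every generator of $M_k$ has degree $\le k<k+1$, so any monomial of degree $k+1$ or $k+2$ in $M_k$ is a product of at least two generators, that is, $M_k^{k+1}$ and $M_k^{k+2}$ are contained in $M_k^{+}\cdot M_k^{+}$. Since $\mathcal S(X)^{+}\cdot\mathcal S(X)^{+}=0$ by Definition~\ref{def:algsphere}, the algebra homomorphism $\varphi$ annihilates all decomposable elements, so $\varphi(M^{+}\cdot M^{+})=0$. Now let $z\in M_k$ be $\diff$-closed with $|z|\in\{k+1,k+2\}$. Then $z$ is decomposable, hence $\varphi(z)=0$; since $\diff_{\mathcal S(X)}=0$ this says $\varphi_\ast[z]=[\varphi(z)]=0$, and as $\varphi_\ast$ is an isomorphism we get $[z]=0$ in $H^{|z|}(M)$. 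Therefore $z=\diff w$ for some homogeneous $w\in M^{|z|-1}$; since $|z|-1\le k+1$, the identification $M^{|z|-1}=M_k^{|z|-1}$ places $w$ inside $M_k$, and because $M_k$ is a sub-DGA this exhibits $z$ as a coboundary in $M_k$. Hence $H^{k+1}(M_k)=H^{k+2}(M_k)=0$.

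The only step requiring genuine care is the degree bookkeeping of the second paragraph, namely ensuring that the primitive $w$ of a closed class of degree $k+2$ stays inside $M_k$; this is precisely where the oddness of $k$ and the absence of even-degree generators are used, since otherwise a generator of the (even) degree $k+1$ could appear in $w$ and spoil the conclusion. Everything else is routine.
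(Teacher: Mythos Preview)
Your proof is correct and follows essentially the same strategy as the paper's: both arguments establish that $H^{k+1}(M_k)$ and $H^{k+2}(M_k)$ inject into $H^\bullet(M)$ via the degree count $M^{\le k+1}=M_k^{\le k+1}$ (which uses the absence of even-degree generators), and then show the image is zero. The only cosmetic difference is that where the paper invokes Proposition~\ref{prop:algsphere}(ii) to see that a decomposable closed class in degree $k+2$ must vanish in $H^{k+2}(M)$, you unpack this directly via the quasi-isomorphism $\varphi\colon M\to\mathcal S(X)$ and the relation $\mathcal S(X)^+\cdot\mathcal S(X)^+=0$; these are two phrasings of the same observation.
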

\begin{proof}
	$H^{k+1}(M_k)=0$ as $H^{k+1}(M_k) \to  H^{k+1}(M)=0$ is injective.
	
	By  Proposition $\ref{prop:algsphere}$ above,  $M$ has no even-degree generators. In particular, we have $M_k=M_{k+1}$. Moreover,
	$H^{\geq 1}(M)$ is spanned by odd-degree generators. From the first observation  it
	follows that the map $H^{k+2}(M_k) \to  H^{k+2}(M)$ is injective, and from the second that its range is $0$.
	
\end{proof}

It follows that for an odd $k$, we have  $M_{k+2} =M_k \otimes \Lambda (V[k+2])$ as an algebra,  where the vector space $V = V_1 \oplus V_2 $ is placed at degree $(k+2)$,  with $V_1 \cong  H^{k+2}(M)$ and $V_2 = H^{k+3}(M_{k})$.
The  differential can be described as follows. It suffices to define $\diff \colon V \to  M_{k}$. We define $\diff=0$ on $V_1$.
To define $\diff$ on $V_2$, let us choose a basis $\{a_i\}_{i\in I}$ of $H^{k+3}(M_k)$. Let $\{\tilde a_i\}_{i\in I}$ be the corresponding basis of $V_2$. Then we define $\diff \tilde a_i =a_i$.

\begin{proposition} \label{prop:constr}
	For each odd integer $k\le 2N$, there exists a morphism $\varphi_k \colon M_k \to B$ such that
	the induced map on cohomology $H^i(M_k)\cong H^i(M) \to H^i(B)$ is an isomorphism for $i \le k$.
\end{proposition}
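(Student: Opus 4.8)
The plan is to construct the maps $\varphi_k$ by induction on the odd integers $k = 1, 3, 5, \dots, 2N-1$ (note $k \le 2N$ with $k$ odd means $k \le 2N-1$), using the algebra structure $M_{k+2} = M_k \otimes \Lambda(V[k+2])$ with $V = V_1 \oplus V_2$ described just above the statement. The base case $k=1$: here $M_1$ is the free commutative algebra on the degree-$1$ generators $X_1$, which form a basis of $H^1(B)$; since these generators are closed and of odd degree, just send each generator to a chosen cocycle in $B$ representing the corresponding class, and extend multiplicatively. Because $H^1(M_1) \cong H^1(B)$ and $H^1(M) \to H^1(B)$ is then the identity on a chosen basis, the induced map is an isomorphism in degree $i \le 1$ (in degree $0$ both sides are $\mathbb Q$).

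For the inductive step, suppose $\varphi_k \colon M_k \to B$ is built with $\varphi_k^* \colon H^i(M_k) \to H^i(B)$ an isomorphism for $i \le k$. I want to extend it over $M_{k+2} = M_k \otimes \Lambda(V_1 \oplus V_2)[k+2]$. On $V_1 \cong H^{k+2}(M)$: since $H^{k+2}(B) \cong H^{k+2}(M)$ (as $k+2 \le 2N$ and $k+2$ is odd, this is one of the nonzero odd-degree groups, isomorphic via the bouquet construction), pick for each basis element $v \in V_1$ a cocycle in $B^{k+2}$ representing the matching class, and set $\varphi_{k+2}(v)$ to be that cocycle; this is consistent with $\diff v = 0$. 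On $V_2$: here $\diff \tilde a_i = a_i \in H^{k+3}(M_k)$, represented by a cocycle $\alpha_i \in M_k^{k+3}$; we need $\varphi_{k+2}(\tilde a_i) \in B^{k+2}$ with $\diff \varphi_{k+2}(\tilde a_i) = \varphi_k(\alpha_i)$. This is solvable precisely when $\varphi_k(\alpha_i)$ is a coboundary in $B$, i.e. when $\varphi_k^*[\alpha_i] = 0$ in $H^{k+3}(B)$. Now $[\alpha_i] \in H^{k+3}(M_k)$, and the key point is that $H^{k+3}(M_k) \to H^{k+3}(M)$ is \emph{zero} (this is exactly the content of the Lemma: its range is $0$), so $[\alpha_i]$ maps to zero in $H^{k+3}(M) \cong H^{k+3}(B)$; but we must relate $\varphi_k^* \colon H^{k+3}(M_k) \to H^{k+3}(B)$ to this. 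The composite $H^{k+3}(M_k) \to H^{k+3}(M) \to H^{k+3}(B)$ need not obviously equal $\varphi_k^*$ since $\varphi_k$ is only defined on $M_k$, not on $M$ — however, $\varphi_k^*$ factors through the image of $H^{k+3}(M_k)$ in... this is the delicate point, addressed below. Having chosen $\varphi_{k+2}$ on generators, extend multiplicatively over $M_k$.

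Finally, one checks $\varphi_{k+2}^* \colon H^i(M_{k+2}) \to H^i(B)$ is an isomorphism for $i \le k+2$. For $i \le k$ nothing new is added in those degrees (the new generators have degree $k+2 > i$, and—using that $M_k = M_{k+1}$ since there are no even generators—also degree $k+1$ is unaffected, consistent with $H^{k+1}(M_k) = 0$ from the Lemma), so it remains an isomorphism there by induction. In degree $k+1$ both $H^{k+1}(M_{k+2})$ and $H^{k+1}(B)$ vanish ($k+1$ even, $\le 2N$). In degree $k+2$: by Proposition~\ref{prop:algsphere}(ii), $H^{k+2}(M)$ is spanned by generators, and these are exactly the elements of $V_1$ together with the surviving old generators; the $V_2$-generators are killed in cohomology (they bound), and the images $\varphi_{k+2}(v)$ for $v \in V_1$ were chosen to represent a basis of $H^{k+2}(B) \cong H^{k+2}(M)$, so $\varphi_{k+2}^*$ is an isomorphism in degree $k+2$. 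One also verifies injectivity/surjectivity carefully using the long exact (Gysin-type) sequences associated to adjoining the generators in $V_1$ and $V_2$, analogous to Proposition~\ref{prop:gysin}.

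\textbf{Main obstacle.} The crux is the solvability of $\diff \varphi_{k+2}(\tilde a_i) = \varphi_k(\alpha_i)$, i.e. showing $\varphi_k^*[\alpha_i] = 0$ in $H^{k+3}(B)$ for $[\alpha_i] \in H^{k+3}(M_k)$. The Lemma gives that $[\alpha_i] \mapsto 0$ in $H^{k+3}(M)$, and we know $H^{k+3}(M) \cong H^{k+3}(B)$, so the issue is to confirm that $\varphi_k^*$ on $H^{k+3}(M_k)$ agrees with the natural composite $H^{k+3}(M_k) \to H^{k+3}(M) \xrightarrow{\sim} H^{k+3}(B)$. This should follow from arranging the construction so that $\varphi_k$ is compatible with a (not yet fully defined) extension to $M$ — or, more cleanly, from the fact that $H^{k+3}(M_k) \to H^{k+3}(M)$ being zero forces $\varphi_k^*$ to factor through it once we know $\varphi_k$ was built inductively in a way consistent with the bouquet's cohomology in all degrees $\le k+2$; making this precise (rather than circular) is where the real care is needed, and it is likely handled by tracking, alongside $\varphi_k$, the explicit identification of $H^{\le k+2}(M_k)$ with $H^{\le k+2}(B)$ through the construction.
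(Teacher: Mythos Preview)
Your inductive scheme is exactly the one the paper uses: build $\varphi_{k+2}$ from $\varphi_k$ by sending the $V_1$-generators to chosen cocycle representatives in $B^{k+2}$ and the $V_2$-generators to primitives of the images $\varphi_k(\alpha_i)$. The verification that $\varphi_{k+2}^*$ is an isomorphism through degree $k+2$ is also as in the paper.

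However, the ``main obstacle'' you isolate is illusory, and the elaborate compatibility argument you sketch (factoring $\varphi_k^*$ through $H^{k+3}(M_k)\to H^{k+3}(M)$, etc.) is unnecessary. Since $k$ is odd, $k+3$ is \emph{even}; and in the inductive step one has $k\le 2N-3$, so $k+3\le 2N$. The standing hypothesis on $B$ then gives $H^{k+3}(B)=0$ outright. Hence $\varphi_k(\alpha_i)\in B^{k+3}$ is automatically a coboundary, and a primitive $\theta_i$ with $\diff\theta_i=\varphi_k(\alpha_i)$ exists with no further argument. This is precisely how the paper dispatches the point in one line (``Since $H^{n+3}(B)=0$, \dots''). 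Once you replace your obstacle paragraph with this observation, your proof coincides with the paper's.
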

\begin{proof}We construct the maps $\varphi_k$ by induction. By the previous lemma and the fact that $M$ has no even degree generators, it suffices to define $\varphi_k$ for odd integers $k$ . The case where $k=1$ is clear.
	
Now assume that we have constructed $\varphi_n$, with $n$ an odd integer $\leq 2N-3$. We shall extend $\varphi_n$ to a morphism $\varphi_{n+2}$ on $M_{n+2} = M_n\otimes \Lambda (V[n+2])$,  where the vector space $V = V_1 \oplus V_2 $ is placed at degree $(n+2)$,  with $V_1 \cong  H^{n+2}(M)$ and $V_2 = H^{n+3}(M_{n})$. It suffices to define $\varphi_{n+2}$ on $V$. Let $\{b_j\}_{j\in J}$ be a basis of $H^{n+2}(B)$. Since $H^{n+2}(M)\cong H^{n+2}(B)$, let  $\{\tilde b_j\}_{j\in J}$ be the corresponding basis of $V_1$. We define $\varphi_{n+2}$ on $V_1$ by setting $\varphi_{n+2}(\tilde b_j) = b_j$. Similarly, choose a basis $\{c_\lambda\}_{\lambda \in K}$  of $H^{n+3}(M_n)$, and let $\{\tilde c_\lambda\}_{\lambda \in K}$ be the corresponding
	basis of $V_2$. Since $H^{n+3}(B) = 0$, for each $c_\lambda \in M_n$, there exists $\theta_\lambda\in B$ such that $\varphi_n(c_\lambda) = \diff \theta_\lambda$. We define $\varphi_{n+2}$ on $V_2$ by setting $\varphi_{n+2}(\tilde c_\lambda) =\theta_\lambda$.
	By construction, the induced map $(\varphi_{n+2})_\ast$ on $H^i$ agrees with $(\varphi_n)_\ast$ for $i\leq n+1$ and $(\varphi_{n+2})_\ast$ is an isomorphism on $H^{2n+2}$. This finishes the proof.

\end{proof}

Now let $\mathcal{M}_B$ be a minimal model of $B$ and $(\mathcal{M}_B)_k$ be the subalgebra generated by the generators of degree $\le k$. Combining the above results, we have proved the following proposition.

\begin{proposition} \label{prop:sw} The commutative DGAs $(\mathcal{M}_B)_{2N-1}$ and $M_{2N-1}$ are isomorphic.
\end{proposition}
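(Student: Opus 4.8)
The plan is to recognise the pair $(M_{2N-1},\varphi_{2N-1})$, with $\varphi_{2N-1}$ the morphism produced in Proposition~\ref{prop:constr}, as an $n$-minimal model of $B$ for $n=2N-1$, and then to invoke the uniqueness of minimal models (in its truncated form) to identify it with $(\mathcal M_B)_{2N-1}$.

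First I would verify that $M_{2N-1}$ is a minimal DGA with generators concentrated in degrees $\le 2N-1$. By Proposition~\ref{prop:algsphere}, the minimal model $M=\mathcal M(X)$ has no even-degree generators, so $M_{2N-1}$ is the free graded-commutative algebra on the generators of $M$ of (odd) degree $\le 2N-1$. The differential restricts to $M_{2N-1}$: by minimality of $M$, each $\diff x_\alpha$ is a sum of monomials of word length $\ge 2$ in the generators, and if $\deg x_\alpha\le 2N-1$ then $\deg\diff x_\alpha\le 2N$, so every generator occurring in $\diff x_\alpha$ has degree $\le 2N-1$; since $\diff$ is a derivation, $\diff(M_{2N-1})\subseteq M_{2N-1}$. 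Minimality of the differential is inherited from $M$.

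Next I would check the cohomological conditions. By Proposition~\ref{prop:constr} with $k=2N-1$, the map $\varphi_{2N-1}\colon M_{2N-1}\to B$ induces an isomorphism $H^i(M_{2N-1})\xrightarrow{\cong}H^i(B)$ for all $i\le 2N-1$. By the Lemma above, applied to the odd integer $2N-1$, we have $H^{2N}(M_{2N-1})=0$, so injectivity of $\varphi_{2N-1}$ on $H^{2N}$ is automatic. Thus $(M_{2N-1},\varphi_{2N-1})$ is a $(2N-1)$-minimal model of $B$. On the other side, $(\mathcal M_B)_{2N-1}$ is minimal with generators in degrees $\le 2N-1$ by definition, and the inclusion $(\mathcal M_B)_{2N-1}\hookrightarrow\mathcal M_B$ induces an isomorphism on $H^{\le 2N-1}$ and an injection on $H^{2N}$; composed with the quasi-isomorphism $\mathcal M_B\to B$, this exhibits $(\mathcal M_B)_{2N-1}$ as a $(2N-1)$-minimal model of $B$ as well (this is the standard fact that the degree-$\le n$ part of a minimal model is an $n$-minimal model; see \cite{MR0646078}, cf.\ \cite{MR1802847,MR2355774}). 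Since the $n$-minimal model of a connected commutative DGA is unique up to isomorphism, we conclude $M_{2N-1}\cong(\mathcal M_B)_{2N-1}$.

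The one point that needs genuine care is the uniqueness input together with the check that $M_{2N-1}$ satisfies its hypotheses — especially the vanishing $H^{2N}(M_{2N-1})=0$, which the Lemma supplies and which is exactly what makes the ``monomorphism on $H^{n+1}$'' clause vacuous. If one prefers to avoid citing uniqueness of $n$-minimal models as a black box, the same conclusion can be obtained concretely: lift $\varphi_{2N-1}$ through the quasi-isomorphism $\mathcal M_B\to B$ to a morphism $\psi\colon M_{2N-1}\to\mathcal M_B$ (legitimate because $M_{2N-1}$ is free, hence a cofibrant DGA); a degree count — any element of $\mathcal M_B$ of degree $\le 2N-1$ is a polynomial in generators of degree $\le 2N-1$ — shows that $\psi$ factors through $(\mathcal M_B)_{2N-1}$, and $\psi$ induces an isomorphism on $H^i$ for $i\le 2N-1$ and on $H^{2N}$ (both sides vanishing there); finally, a morphism of minimal DGAs generated in degrees $\le 2N-1$ inducing an isomorphism on $H^{\le 2N-1}$ and a monomorphism on $H^{2N}$ is an isomorphism, by the usual induction on the graded vector space of indecomposable generators.
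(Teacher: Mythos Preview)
Your proposal is correct and is exactly the argument the paper has in mind: the text gives no explicit proof, stating only that the proposition follows ``combining the above results,'' and what you have written is the natural unpacking of that sentence via uniqueness of $(2N-1)$-minimal models, using Proposition~\ref{prop:constr} for the map and the Lemma for the vanishing $H^{2N}(M_{2N-1})=0$. Your alternative concrete lifting argument is also fine and amounts to reproving that uniqueness in this special case.
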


Moreover, we have the following result, which is an immediate consequence of the construction in Proposition $\ref{prop:constr}$.
\begin{corollary}\label{cor:tosphere}
	Let $B$ be a connected commutative DGA such that $H^{2i}(B)=0$ for all  $ 0 < 2i  \le 2N$. Let $\alpha$ be a nonzero class in $H^{2k+1}(\mathcal{M}_B)$ with $2k+1<2N$.
	Then there exists a morphism $\psi \colon \mathcal{M}_B \to (\Lambda(\eta), 0)$  such that $\psi_*(\alpha)=[\eta]$, where $\eta$ has degree $2k+1$ and $\Lambda(\eta)$ is the free commutative graded algebra generated by $\eta$.
\end{corollary}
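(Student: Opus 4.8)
\emph{Proof proposal.} The plan is to describe all morphisms $\psi\colon\mathcal M_B\to\Lambda(\eta)$ explicitly and then pick one that detects $\alpha$. Write $\mathcal M_B=(\Lambda W,\diff)$ with generators $W=\bigoplus_{m\ge 1}W^m$. Since $\Lambda(\eta)=\mathbb Q\oplus\mathbb Q\eta$ is connected and concentrated in degrees $0$ and $2k+1$, any such $\psi$ is forced to kill every generator of degree $\ne 2k+1$ and to send $W^{2k+1}$ into $\mathbb Q\eta$; hence $\psi=\psi_\ell$ for a linear functional $\ell\colon W^{2k+1}\to\mathbb Q$, where $\psi_\ell(w)=\ell(w)\eta$ for $|w|=2k+1$ and $\psi_\ell(w)=0$ otherwise. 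First I would check the converse: each $\psi_\ell$ really is a morphism. Multiplicativity is immediate, and the chain-map identity $\psi_\ell(\diff w)=0$ holds automatically because $\diff w\in\Lambda^{\ge 2}W$ by minimality while $\psi_\ell$ annihilates $\Lambda^{\ge 2}W$ — the image of a monomial of length $\ge 2$ is a product of at least two elements of $\mathbb Q\eta$, which vanishes since $\eta^2=0$. The same observation shows that for a closed $z\in\mathcal M_B^{2k+1}$, decomposed as $z=z_{\mathrm{lin}}+z_{\mathrm{dec}}$ with $z_{\mathrm{lin}}\in W^{2k+1}$ and $z_{\mathrm{dec}}\in\Lambda^{\ge 2}W$, one gets $(\psi_\ell)_*[z]=\ell(z_{\mathrm{lin}})[\eta]$.

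Consequently the linear-part assignment $z\mapsto z_{\mathrm{lin}}$ descends to a well-defined linear map $L\colon H^{2k+1}(\mathcal M_B)\to W^{2k+1}$, since $\diff$ of a degree-$2k$ element again lies in $\Lambda^{\ge 2}W$ and therefore has zero linear part. The corollary then reduces to the single claim $L(\alpha)\ne 0$: granting it, I would choose $\ell$ with $\ell(L(\alpha))=1$ and set $\psi=\psi_\ell$, so that $\psi_*(\alpha)=[\eta]$.

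To establish $L(\alpha)\ne 0$ I would pass to the bouquet model $M=\mathcal M(X)$. Since $2k+1$ is odd and $<2N$, in fact $2k+1\le 2N-1$; as the generators of $\mathcal M_B$ outside $(\mathcal M_B)_{2N-1}$ have degree $\ge 2N$, every element of $\mathcal M_B$ of degree $<2N$ already lies in $(\mathcal M_B)_{2N-1}$, so $H^{2k+1}(\mathcal M_B)=H^{2k+1}\big((\mathcal M_B)_{2N-1}\big)$ and the linear part of $\alpha$ may be computed there. Proposition~\ref{prop:sw} supplies an isomorphism of commutative DGAs $\phi\colon M_{2N-1}\xrightarrow{\ \sim\ }(\mathcal M_B)_{2N-1}$; since it preserves decomposable elements it intertwines the two linear-part maps, i.e.\ $L\circ\phi_*=Q(\phi)\circ L_M$, where $L_M$ is the linear-part map of $M_{2N-1}$ and $Q(\phi)$ is the isomorphism induced by $\phi$ on indecomposables $Q=\Lambda^+/(\Lambda^+)^2$, which in degree $2k+1$ is just an isomorphism $V^{2k+1}\to W^{2k+1}$ of generator spaces. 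By Proposition~\ref{prop:algsphere}(ii) every nonzero class in $H^{\ge 1}(M)$, hence (by the same degree argument) every nonzero class in $H^{2k+1}(M_{2N-1})$, is represented by a nonzero closed generator of $M$; so $L_M$ is injective in degree $2k+1$, and transporting back through $\phi$ shows $L$ is injective on $H^{2k+1}(\mathcal M_B)$. In particular $L(\alpha)\ne 0$, which completes the argument.

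The algebra checks in the first paragraph are routine. The step I expect to require the most care is the compatibility argument in the last paragraph: ``being represented by a generator'' is not literally preserved by an algebra isomorphism, so I would phrase everything through the linear-part maps and the induced map on indecomposables, where naturality under $\phi$ is automatic.
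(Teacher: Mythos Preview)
Your proposal is correct and follows essentially the same route as the paper: both use Proposition~\ref{prop:sw} to identify $(\mathcal M_B)_{2N-1}$ with the bouquet model $M_{2N-1}$ and then invoke Proposition~\ref{prop:algsphere}(ii) to see that $\alpha$ has a generator representative, after which $\psi$ is the obvious projection. Your version is more carefully packaged---parametrizing all morphisms by functionals $\ell$, phrasing ``represented by a generator'' as injectivity of the linear-part map, and transporting via the induced map on indecomposables---which cleanly handles the subtlety you flag in your last paragraph, but the underlying argument is the paper's.
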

\begin{proof} From the description of the minimal model $\mathcal M_B$ of $B$,  it follows that $\mathcal M_B$ has a set of generators such that
	all the cohomology groups up to degree $(2N-1)$ is generated by the cohomology classes of these generators; moreover we can choose these generators
	so that the given class $\alpha$ is represented by a generator, say,  $a$. Then we define $\psi$ by mapping $a$ to $\eta$ and the other generators to $0$.
	
\end{proof}

An inductive application of the same argument above proves the following.

\begin{proposition}
	Suppose $(C, d)$ is a connected commutative DGA with $H^{2k}(C) = 0 $ for all $2k>0$.  Let $X_i$ be  a basis of $H^i(C)$ and $X_C = \bigcup_{i=1}^\infty X_i$. Then the bouquet of odd algebraic spheres $\mathcal M(X_C)$ is a minimal model of $(C, d)$.
\end{proposition}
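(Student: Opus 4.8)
The plan is to construct an explicit quasi-isomorphism $\varphi\colon \mathcal{M}(X_C)\to C$ and then deduce the statement from the minimality of $\mathcal{M}(X_C)$ together with the uniqueness of minimal models. As a preliminary I would record that $\mathcal{S}(X_C)$ carries the zero differential and has all products of generators vanishing, so that $H^0(\mathcal{S}(X_C))=\mathbb{Q}$ and $H^n(\mathcal{S}(X_C))=\mathbb{Q}\langle X_n\rangle$ for $n\geq 1$; since $X_n$ is a basis of $H^n(C)$ and $C$ is connected, this yields $H^n(\mathcal{M}(X_C))\cong H^n(\mathcal{S}(X_C))\cong H^n(C)$ for every $n$, and in particular $H^{2k}(\mathcal{M}(X_C))=0$ for all $k>0$. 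By Proposition~\ref{prop:algsphere}, $\mathcal{M}(X_C)=(\Lambda V,\diff)$ is a minimal model algebra whose generating space $V$ is concentrated in odd degrees and whose positive cohomology is represented by generators.

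The heart of the proof is to rerun the construction of Proposition~\ref{prop:constr} — that is, the Lemma immediately preceding it together with the inductive definition of the morphisms $\varphi_k$ — with $B$ replaced by $C$, with $M$ replaced by $\mathcal{M}(X_C)$, and, crucially, with no upper bound $2N$ on the degrees. Writing $M_k\subseteq\mathcal{M}(X_C)$ for the subalgebra generated by the generators of degree $\leq k$, I would check that the only inputs used in the proof of that Lemma are: $M^j\subseteq M_j$; $M_k=M_{k+1}$ for $k$ odd (no even generators); injectivity of $H^{k+1}(M_k)\to H^{k+1}(M)$ and $H^{k+2}(M_k)\to H^{k+2}(M)$; $H^{\mathrm{even}>0}(M)=0$; and that $H^{\geq 1}(M)$ is generated by generators — all of which hold here by the preliminary remarks. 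Consequently, for every odd $k$ one still obtains $M_{k+2}=M_k\otimes\Lambda(V[k+2])$ with $V[k+2]=V_1\oplus V_2$, where $V_1\cong H^{k+2}(\mathcal{M}(X_C))$ with $\diff=0$, and $V_2$ (placed in degree $k+2$, with $V_2\cong H^{k+3}(M_k)$) has $\diff$ mapping it isomorphically onto a set of cocycles representing a basis of $H^{k+3}(M_k)$; and exactly as in Proposition~\ref{prop:constr} — using only that $H^{k+3}(C)=0$, an even positive degree, together with choices of cocycle representatives and primitives in $C$ — one produces a compatible family of morphisms $\varphi_k\colon M_k\to C$, $k$ odd, each restricting to the previous one, with $(\varphi_k)_\ast$ an isomorphism on $H^i$ for $i\leq k$. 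The index sets involved may well be infinite, but no finiteness of type is ever used. Setting $\varphi=\varinjlim_k\varphi_k$ gives a morphism $\mathcal{M}(X_C)\to C$.

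To finish I would verify that $\varphi$ is a quasi-isomorphism. Since $\mathcal{M}(X_C)=\varinjlim_k M_k$ and cohomology commutes with direct limits, and since for odd $k\geq n$ the generators of degree $>k$ all have degree $\geq k+2>n$ (there being no even ones) and hence neither contribute to nor bound classes in degree $n$, the inclusion $M_k\hookrightarrow\mathcal{M}(X_C)$ induces an isomorphism on $H^n$ for all odd $k\geq n$; therefore $\varphi_\ast=(\varphi_k)_\ast$ on $H^n$, which is an isomorphism. Thus $\varphi$ is a cohomology isomorphism, and since $\mathcal{M}(X_C)$ is already a minimal model algebra by Proposition~\ref{prop:algsphere}, it is a minimal model of $(C,\diff)$, unique up to isomorphism. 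I expect the one genuinely delicate point to be the bookkeeping of the previous paragraph: namely, confirming that nothing in the proofs of the Lemma and of Proposition~\ref{prop:constr} tacitly relied on finite type or on the bound $2N$, that the $\varphi_k$ truly assemble into a morphism of commutative DGAs on the colimit, and that the identification $H^n(\mathcal{M}(X_C))\cong H^n(M_k)$ for $k\geq n$ is legitimate. A variant of the argument avoids constructing $\varphi$ directly and instead applies Proposition~\ref{prop:sw} for all $N$, to $C$ and a minimal model $\mathcal{M}_C$ of it, observing that the isomorphisms $(\mathcal{M}_C)_{2N-1}\cong M_{2N-1}$ can be chosen compatibly in $N$ and passing to the union; but this requires the same coherence check, so it amounts to the same work.
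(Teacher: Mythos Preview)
Your proposal is correct and follows exactly the route the paper intends: the paper's own proof is the single sentence ``An inductive application of the same argument above proves the following,'' and what you have written is a careful unpacking of precisely that inductive application of the Lemma and Proposition~\ref{prop:constr} with the bound $2N$ removed. Your attention to the coherence of the $\varphi_k$ in the colimit and to the non-use of finite type is appropriate and goes beyond what the paper spells out.
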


Applying the above proposition to the commutative DGA $(TA, \diff)$ from Theorem $\ref{thm:itoddsphere}$ immediately gives us the following corollary.
\begin{corollary}\label{cor:ta}
	With the same notation as above, the minimal model of $(TA, d)$ is isomorphic to a bouquet of odd algebraic spheres.
\end{corollary}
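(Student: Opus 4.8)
The plan is to derive this corollary as an immediate consequence of the Proposition immediately preceding it. The first step is to recall from Theorem~\ref{thm:itoddsphere} that the iterated odd algebraic spherical fibration $(TA, d)$ over $(A, d)$ satisfies $H^{2k}(TA) = 0$ for all $2k > 0$. Since $TA = A \otimes \Lambda V$ with $V$ concentrated in positive (indeed odd) degrees, one has $(TA)^0 = A^0$, so $(TA, d)$ is connected whenever $(A, d)$ is; thus, taking $(A, d)$ connected, the commutative DGA $(C, d) = (TA, d)$ satisfies the hypotheses of that Proposition.

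The second step is to apply the Proposition directly: choosing a basis $X_i$ of $H^i(TA)$ for every $i \geq 1$ and setting $X_{TA} = \bigcup_{i=1}^{\infty} X_i$, the bouquet of odd algebraic spheres $\mathcal{M}(X_{TA})$ is a minimal model of $(TA, d)$. The third and last step is to invoke the uniqueness of minimal models up to isomorphism (the Remark following the definition of minimal model algebra): any minimal model of $(TA, d)$ is then isomorphic to $\mathcal{M}(X_{TA})$, hence to a bouquet of odd algebraic spheres, which is the assertion of the corollary.

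There is no substantive obstacle in this argument. The only point requiring a word of care is that the statement implicitly takes $(A, d)$ to be connected --- this is the standing hypothesis under which minimal models, and the algebraic fiber, are defined in this section --- and, as observed in the first step, this connectedness is automatically inherited by $(TA, d)$, so nothing extra needs to be verified.
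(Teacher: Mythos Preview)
Your proposal is correct and follows exactly the paper's own approach: the paper simply states that the corollary follows immediately by applying the preceding Proposition to $(TA,d)$ from Theorem~\ref{thm:itoddsphere}. Your version adds the explicit check that $(TA,d)$ inherits connectedness from $(A,d)$ and invokes uniqueness of minimal models, both of which are implicit in the paper's one-line justification.
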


Before proving the main theorem of this section, we shall prove the following special case first.

\begin{theorem}\label{thm:oddsphere}
Let  $(\Lambda (x), \diff)$ be the commutative DGA generated by  $x$ of degree $2k+1\geq 1$ such that  $\diff x=0$. For any algebraic fibration $\varphi: (\Lambda(x), \diff) \to (\Lambda(x)\otimes \Lambda V, \diff)$ whose algebraic fiber $(\Lambda V, \bar \diff )$  has finite cohomological dimension, the map $\varphi_\ast: H^j(\Lambda (x)) \to H^j(\Lambda(x)\otimes \Lambda V)$ is injective for all $j$.
	
\end{theorem}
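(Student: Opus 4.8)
The plan is to reduce to the case where everything has finite type and then run a Gysin-type spectral-sequence (or long exact sequence) argument driven by multiplication by $x$. First I would observe that the only cohomology of $(\Lambda(x), \diff)$ is $\mathbb Q$ in degree $0$ and $\mathbb Q\cdot[x]$ in degree $2k+1$; so injectivity of $\varphi_\ast$ is automatic in every degree except $2k+1$, and the whole theorem amounts to showing $[x]$ does not die in $H^{2k+1}(\Lambda(x)\otimes\Lambda V)$. I would set $E = \Lambda(x)\otimes\Lambda V$ and $\bar E = (\Lambda V, \bar\diff) = \mathbb Q\otimes_{\Lambda(x)} E$, the algebraic fiber, which by hypothesis has finite cohomological dimension, say $d$.

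The key step is to exploit the short exact sequence of $\Lambda(x)$-modules
\[ 0 \to x\cdot E \to E \to E/xE \to 0, \]
and to notice that, since $x^2=0$, multiplication by $x$ gives an isomorphism of complexes $E/xE \xrightarrow{\ \cong\ } x\cdot E$ up to a degree shift by $2k+1$. Moreover $E/xE \cong \bar E$ as complexes (one must check the differential descends correctly, which it does because $\diff x = 0$ forces $\diff(xE)\subseteq xE$). This yields a long exact sequence
\[ \cdots \to H^{i-2k-1}(\bar E) \xrightarrow{\ \cup e\ } H^{i}(\bar E) \to H^{i}(E) \to H^{i-2k}(\bar E)\to\cdots, \]
where $e \in H^{2k+1}(\bar E)$ is the class of the connecting term — but here $e=0$ since $\diff x = 0$, so in fact the sequence splits into short exact sequences
\[ 0 \to H^{i}(\bar E) \to H^{i}(E) \to H^{i-2k-1}(\bar E) \to 0. \]
The class $[x]\in H^{2k+1}(E)$ maps to $1 \in H^0(\bar E) = \mathbb Q$ under the right-hand map (this is essentially the definition of the fiber), so $[x]\ne 0$, which is exactly what we need.

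Actually, for the minimal case $\diff x = 0$ the argument is this clean, so the statement follows directly; the finite cohomological dimension of the fiber is not even needed for this particular theorem (it will be needed for the general Theorem B, where one must control an inductive filtration). The main point to be careful about is the claim that the complex $(E/xE, \diff)$ is genuinely the fiber complex $(\Lambda V, \bar\diff)$ and that multiplication by $x$ is a chain isomorphism onto $x\cdot E$ with the stated degree shift; this is where all the (routine) bookkeeping lives, in particular tracking the Koszul sign in $\diff(x\omega) = -x\,\diff\omega$ and checking it does not obstruct the isomorphism. The expected obstacle, if any, is purely notational: making sure the filtration $V = \bigcup V(n)$ of the algebraic fibration is compatible with the module decomposition so that the long exact sequence is legitimately a sequence of the fiber cohomology and not merely of some associated graded object. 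Once that is in place the conclusion is immediate.
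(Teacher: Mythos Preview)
There is a genuine gap: your claim that the connecting homomorphism in the long exact sequence vanishes is false, and relatedly, your assertion that the finite cohomological dimension hypothesis ``is not even needed'' is also false. Take $V=\mathbb Q\cdot y$ with $|y|=2k$ and set $dy=x$ in $E=\Lambda(x)\otimes\Lambda(y)$. Then the fiber is $\bar E=(\mathbb Q[y],0)$, of infinite cohomological dimension, and $x=dy$ is exact in $E$, so $\varphi_\ast[x]=0$. In your sequence this manifests as $\partial[y]=[1]\neq 0$ in $H^0(\bar E)$: the connecting map sends a fiber class $[v]$ (with $\bar d v=0$) to the class of the coefficient of $x$ in $dv$, and nothing forces this to vanish. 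The confusion appears to be with the Gysin sequence for a sphere \emph{fibre}, where the Euler class lives in the base and vanishes when $dx=0$; here the sphere is the \emph{base}, the relevant sequence is the Wang sequence, and its connecting map is a genuine, generally nonzero, derivation on $H^\ast(\bar E)$ of degree $-2k$.

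The paper's proof uses the finite cohomological dimension in an essential way. It argues directly: if $x=d(wx+v)$ with $w,v\in\Lambda V$, a degree count forces $w=0$, so $x=dv$ and hence $\bar d v=0$; finite cohomological dimension then yields a least $n$ with $[v^n]=0$ in $H^\ast(\bar E)$, say $v^n=u_0x+du$, and applying $d$ gives $nv^{n-1}x=(du_0)x$, whence $[v^{n-1}]=0$, contradicting minimality of $n$. Your framework can be repaired along exactly these lines: once you recognise $\partial$ as a derivation, the question is whether $1$ lies in the image of $\partial\colon H^{2k}(\bar E)\to H^0(\bar E)$; if $\partial[v]=1$ then $\partial([v]^n)=n[v]^{n-1}$, and choosing the least $n$ with $[v]^n=0$ reproduces the same contradiction. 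But as written the argument does not go through.
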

\begin{proof}
	The case where $2k+1 = 1$ is trivial. Let us assume $2k+1>1$ in the rest of the proof.
	
	Let $\varphi\colon (\Lambda V, d) \hookrightarrow (\Lambda(x)\otimes\Lambda V, d)$ be any algebraic fibration whose algebraic fiber has finite cohomological dimension. It suffices to show that $\varphi_\ast\colon H^{2k+1}(\Lambda(x)) \to H^{2k+1}(\Lambda(x)\otimes \Lambda V)$ is injective, since the induced map  $\varphi_\ast$ on $H^i$ is automatically injective for $i\neq 2k+1$.
	
	Now suppose to the contrary that
	\[\varphi_\ast(x) = 0 \textup{ in } H^{2k+1}(\Lambda(x)\otimes \Lambda V). \]
	Then we have $ x = \diff (w\cdot x + v)$
	for some $w, v\in \Lambda V$. By inspecting the degrees of the two sides, one sees that $w = 0$. Therefore, we have  $x = \diff v$ for some $v\in \Lambda V$. It follows that $\bar \diff v = 0$.
	
	Now let $n\in \mathbb N$ be the smallest integer such that $[v^n] = 0$ in $H^{\bullet}(\Lambda V, \bar\diff)$. Such an integer exists since $(\Lambda V, \bar\diff)$ has finite cohomological dimension. Then there exists $u\in \Lambda V$ such that $v^n = \bar \diff u$. Equivalently, we have
	\[  v^n =  u_0\cdot x + \diff u,\]
	for some $u_0\in \Lambda V$. It follows that
	\[ 0 = \diff^2 u = \diff( v^n - u_0\cdot x) = nv^{n-1}\cdot x - (du_0)\cdot x. \]
	Therefore, $v^{n-1} = \frac{1}{n} \diff u_0$, which implies that $[v^{n-1}] = 0$ in $H^{\bullet}(\Lambda V, \bar\diff)$. We arrive at a contradiction. This completes the proof.

\end{proof}

Now let us prove the main result of this section.

\begin{theorem} \label{thm:inj} Let $(B, \diff)$ be a connected commutative DGA  such that $H^{2k}(B)=0$ for all $0 < 2k  \leq 2N$. If $\iota \colon (B, \diff) \to (B\otimes \Lambda V, \diff) $ is an algebraic fibration whose  algebraic fiber has finite cohomological dimension, then the induced map
	\[ \iota_\ast \colon \bigoplus_{i< 2N} H^i(B)\to  \bigoplus_{i< 2N} H^i(B\otimes \Lambda V)\]
	is injective.
\end{theorem}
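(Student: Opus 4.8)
The plan is to deduce this from Theorem~\ref{thm:oddsphere} by transporting the fibration to a base of the form $(\Lambda(\eta),0)$. Since $\iota_\ast$ preserves degree, $H^0(B)=\mathbb Q=H^0(B\otimes\Lambda V)$ with $\iota_\ast(1)=1$, and $H^i(B)=0$ for even $i$ with $0<i<2N$, it suffices to fix an odd integer $2k+1<2N$, a nonzero class $\alpha\in H^{2k+1}(B)$, and prove $\iota_\ast(\alpha)\neq 0$. Suppose, for contradiction, that $\iota_\ast(\alpha)=0$ in $H^{2k+1}(B\otimes\Lambda V)$.

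First I would pass to the minimal model of $B$. Let $\varphi\colon\mathcal M_B\xrightarrow{\ \sim\ }B$ be a minimal model. Adding the generators of $V$ one filtration stage $V(n)$ at a time and, at each stage, lifting the relevant cocycle of $B\otimes\Lambda V(n-1)$ through the quasi-isomorphism already built, one produces an algebraic fibration $\hat\iota\colon\mathcal M_B\hookrightarrow\mathcal M_B\otimes\Lambda V$, with a modified differential $D$, together with a quasi-isomorphism $\Psi\colon(\mathcal M_B\otimes\Lambda V,D)\xrightarrow{\ \sim\ }(B\otimes\Lambda V,\diff)$ restricting to $\varphi$ on $\mathcal M_B$ and commuting with the inclusions. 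Since $B\otimes\Lambda V$ is free, hence flat, over $B$, the algebraic fiber $(\Lambda V,\bar\diff)=\mathbb Q\otimes_B(B\otimes\Lambda V)$ already computes $\mathbb Q\otimes^{\mathbb L}_B(B\otimes\Lambda V)$, which is invariant under the quasi-isomorphisms $\varphi$ and $\Psi$; hence the algebraic fiber $(\Lambda V,\bar D)$ of $\hat\iota$ is quasi-isomorphic to $(\Lambda V,\bar\diff)$ and in particular still has finite cohomological dimension. As $\varphi_\ast$ and $\Psi_\ast$ are isomorphisms with $\iota_\ast\circ\varphi_\ast=\Psi_\ast\circ\hat\iota_\ast$, we may replace $(B,\iota,\alpha)$ by $(\mathcal M_B,\hat\iota,\varphi_\ast^{-1}\alpha)$; that is, we may assume $B$ is a minimal Sullivan algebra.

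With $B$ minimal, Corollary~\ref{cor:tosphere} provides a morphism $\psi\colon B\to(\Lambda(\eta),0)$, with $\eta$ of degree $2k+1$, such that $\psi_\ast(\alpha)=[\eta]$. Form the pushout $\Lambda(\eta)\otimes_B(B\otimes\Lambda V)=\Lambda(\eta)\otimes\Lambda V$, whose differential is obtained by applying $\psi$ to the $B$-coefficients in $\diff$; the filtration of $V$ is preserved, so this is an algebraic fibration $j\colon(\Lambda(\eta),0)\hookrightarrow(\Lambda(\eta)\otimes\Lambda V,\diff)$. Because $\psi$ is a morphism of connected commutative DGAs, $\psi(B^+)\subseteq(\Lambda(\eta))^+$, so $\varepsilon_{\Lambda(\eta)}\circ\psi=\varepsilon_B$ and the algebraic fiber of $j$ is literally the same $(\Lambda V,\bar\diff)$ as that of $\iota$, of finite cohomological dimension. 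The morphism $\psi\otimes\mathrm{id}\colon B\otimes\Lambda V\to\Lambda(\eta)\otimes\Lambda V$ intertwines $\iota$ with $j$, so $(\psi\otimes\mathrm{id})_\ast\circ\iota_\ast=j_\ast\circ\psi_\ast$ on cohomology; evaluating at $\alpha$ and using $\iota_\ast(\alpha)=0$ gives $j_\ast([\eta])=0$. But $[\eta]\neq 0$ in $H^{2k+1}(\Lambda(\eta))$, which contradicts the injectivity of $j_\ast$ from Theorem~\ref{thm:oddsphere}. This contradiction proves the theorem.

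The only step that is not purely formal is the passage to $\mathcal M_B$, specifically the claim that the algebraic fiber is carried to a quasi-isomorphic one. The construction of $\hat\iota$ and $\Psi$ is the standard relative model argument, but one must check that the algebraic fiber is a homotopy invariant of the fibration; besides the derived tensor product argument above, one can see this concretely by noting that the cocycle lifting alters $\bar\diff$ on each generator of $V$ only by a $\bar\diff$-coboundary, and that this perturbation is conjugated away by an automorphism of $\Lambda V$ which is triangular with respect to the filtration of $V$. I expect this homotopy invariance of the algebraic fiber to be the only point that requires real care.
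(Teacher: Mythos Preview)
Your proof is correct and follows the same strategy as the paper: lift the fibration over the minimal model $\mathcal M_B$, use Corollary~\ref{cor:tosphere} to map onto $(\Lambda(\eta),0)$, push the fibration forward, and invoke Theorem~\ref{thm:oddsphere} to derive a contradiction. In fact you are more careful than the paper on one point: the paper constructs the lifted fibration $\varphi\colon\mathcal M_B\hookrightarrow\mathcal M_B\otimes\Lambda V$ and then applies Theorem~\ref{thm:oddsphere} to its push-forward without pausing to check that the algebraic fiber of $\varphi$ (and hence of the push-forward $\tau$) still has finite cohomological dimension, whereas you explicitly flag this and supply an argument. Your derived--tensor--product justification and the triangular-automorphism alternative are both standard and adequate here; either one closes the point.
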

\begin{proof}
	
Let $f\colon (\mathcal M_B, \diff)\to (B, \diff)$ be a minimal model algebra of $B$.
\begin{claim*}
	For any algebraic fibration $\iota \colon (B, \diff) \to (B\otimes \Lambda V, \diff) $, there exist an algebraic fibration $\varphi\colon (\mathcal M_B, \diff) \to (\mathcal M_B\otimes \Lambda V, \diff)$ and a quasi-isomorphism $g\colon (\mathcal M_B\otimes \Lambda V, \diff) \to (B\otimes \Lambda V, \diff)$ such that the  following diagram commutes:
	\[  \xymatrix{ \mathcal M_B  \ar@{^{(}->}[d]_{\varphi} \ar[r]^f & B \ar@{^{(}->}[d]^\iota \\
		\mathcal M_B\otimes \Lambda V \ar[r]^-{g} &  B\otimes \Lambda V.} \]
\end{claim*}
We construct $\varphi$ and $g$ inductively. Consider the filtration $V = \cup_{n=0}^\infty V(k)$ from Definition $\ref{def:algfib}$. Choose a basis $\{x_i\}_{i\in I_0}$ of $V(0)$. Let $x = x_i$ be a basis element. If $\diff x = a \in B$, then $\diff a = \diff^2 x = 0$. It follows that there exists $\tilde a \in \mathcal M_B$ such that $f(\tilde a) = a + \diff c$ for some $c\in B$. We define an algebraic fibration $\varphi_0\colon (\mathcal M_B, d) \hookrightarrow (\mathcal M_B\otimes \Lambda(x), \diff)$ by setting $\diff x = \tilde a$. Moreover, we extend $f\colon (\mathcal M_B, d)  \to (B, d)$ to a morphism (of commutative DGAs) $g_0\colon (\mathcal M_B\otimes \Lambda (x), \diff) \to (B\otimes \Lambda (x), \diff)$ by setting $g(x) = x + c$. By the Gysin sequence from Section $\ref{prop:gysin}$, we see that $g_0$ is a quasi-isomorphism. Now apply the same construction to all basis elements $\{x_i\}_{i\in I_0}$. We still denote the resulting morphisms by $\varphi_0 \colon (\mathcal M_B, \diff) \to (\mathcal M_B\otimes \Lambda (V(0)), \diff)$ and $g_0\colon (\mathcal M_B\otimes \Lambda (V(0)), \diff) \to (B\otimes \Lambda (V(0)), \diff)$.

Now suppose we have constructed an algebraic fibration
\[ \varphi_{k} \colon (\mathcal M_B\otimes \Lambda (V(k-1)), \diff) \to (\mathcal M_B\otimes \Lambda (V(k)), \diff)\] and a quasi-isomorphism $g_k\colon (\mathcal M_B\otimes \Lambda (V(k)), \diff) \to (B\otimes \Lambda (V(k)), \diff)$ such that
the  following diagram commutes:
\[  \xymatrixcolsep{4pc}\xymatrix{ \mathcal M_B\otimes \Lambda (V(k-1))  \ar@{^{(}->}[d]_{\varphi_k} \ar[r]^{g_{k-1}} & B\otimes \Lambda(V(k-1)) \ar@{^{(}->}[d]^\iota \\
	\mathcal M_B\otimes \Lambda (V(k)) \ar[r]^-{g_k} &  B\otimes \Lambda (V(k)).} \]
Let $\{y_i\}_{i\in I_{k+1}}$ be a basis of $V(k+1)$ that extends the basis $\{x_i\}_{i\in I_{k}}$ of $V(k)\subseteq V(k+1)$. Apply the same construction above to elements in $\{y_i\}_{i\in I_{k+1}} \backslash \{x_i\}_{i\in I_{k}}$, but with $B\otimes \Lambda (V(k))$ in place of  $B$,  and  $\mathcal M_B\otimes \Lambda (V(k))$
in place of $\mathcal M_B$.

We define $(\mathcal M_B\otimes \Lambda V, d)$ to be the direct limit of  $(\mathcal M_B\otimes \Lambda (V(k)), \diff)$ with respect to the morphisms $\varphi_{k} \colon (\mathcal M_B\otimes \Lambda (V(k-1)), \diff)$. We define $\varphi$ to be the  natural inclusion morphism  $ (\mathcal M_B, \diff) \hookrightarrow (\mathcal M_B\otimes \Lambda V, \diff)$. The morphisms $g_k$ together also induce a quasi-isomorphism $g\colon (\mathcal M_B\otimes \Lambda V, \diff) \to (B\otimes \Lambda V, \diff)$, which makes the diagram in the claim  commutative. This finishes the proof of the claim.

Now assume to the contrary that there exists $0\neq \alpha\in H^{2k+1}(B)$ with $2k+1 < 2N$ such that $\iota_\ast(\alpha) = 0$. Let $\tilde \alpha \in H^{2k+1}(\mathcal M_B)$ be the class such that $f_\ast(\tilde \alpha) = \alpha$. In particular, we have $\varphi_{\ast}(\tilde \alpha) = 0$.  By Corollary $\ref{cor:tosphere}$, there exists a morphism $\psi\colon  (\mathcal M_B, \diff) \to (\Lambda (\eta), 0)$  such that $\psi_{\ast}(\tilde \alpha) = \eta$. Now let \[ \tau\colon (\Lambda(\eta), 0) \to (\Lambda(\eta) \otimes \Lambda V, \diff) = (\Lambda(\eta) \otimes_{\mathcal M_B} (\mathcal M_B\otimes \Lambda V), \diff)\] be the push-forward algebraic fibration of $\varphi\colon (\mathcal M_B, \diff) \to (\mathcal M_B\otimes \Lambda V, \diff)$. It follows that
\[ \tau_\ast(\eta) = \tau_\ast \psi_{\ast}(\tilde \alpha) = (\psi\otimes 1)_\ast\varphi_{\ast}(\tilde \alpha) = 0 \]
which contradicts Theorem $\ref{thm:oddsphere}$. This completes the proof.

\end{proof}

\end{document}